\def\qed{{\hbadness=10000\hfill\ \vbox{\hrule height.09ex
     \hbox{\vrule width.09ex height1.55ex depth.2ex \kern1.8ex
     \vrule width.09ex height1.55ex depth.2ex}\hrule height.09ex}\break
     \bigskip}}
\newtheorem{thm}{Theorem}[section]
\newtheorem{prop}[thm]{Proposition}
\newtheorem{cor}[thm]{Corollary}
\numberwithin{equation}{section}
\begin{document}

\title{A Study of Groups through Transversals}

\author{Vivek Kumar Jain*
~ and Vipul Kakkar**\\
*Department of Mathematics, Central University of Bihar \\
Patna (India) 800014\\
**School of Mathematics, Harish-Chandra Research Institute\\
Allahabad (India) 211019\\
Email: jaijinenedra@gmail.com; vplkakkar@gmail.com}

\date{}
\maketitle


\begin{abstract}
In this note, a necessary and sufficient condition for the normalizer of a core-free subgroup $H$ of a finite group $G$ to be normal in $G$ is obtained. Also, a known result of finite groups is obtained through transversal. \end{abstract}


\noindent \textbf{Keywords:} Right loop, Right Transversal, Solvable Right Loop, Nilpotent Right Loop
\\
\noindent \textbf{2010 Mathematics subject classification:} 20D60; 20N05

\section{Introduction}
Transversals play an important role in characterizing the group and the embedding of subgroup in the group. In \cite{tm}, Tarski monsters has been characterized with the help of transversals.  In \cite{p}, Lal shows that if all subgroups of a finitely generated solvable group are stable (any right transversals of a subgroup have isomorphic group torsion), then the group is nilpotent. In \cite{ict2}, it has been shown that if the isomorphism classes of transversals of a subgroup in a finite group is $3$, then the subgroup itself has index $3$. Converse of this fact is also true.


Let $G$ be a group and $H$ a proper subgroup of $G$. A {\textit{right transversal}} is a subset of $G$ obtained by selecting one and only one element from each right coset of $H$ in $G$ and identity from the coset $H$. Now we will call it {\textit {transversal}} in place of right transversal. Suppose that $S$ is a transversal of $H$ in $G$. We define an operation $\circ $ on $S$ as follows: for $x,y \in S$, $\{x \circ y \}:=S \cap Hxy$.
It is easy to check that $(S, \circ )$ is a  right loop, that is the equation of the type $X \circ a=b$, $X$ is unknown and $a,b \in S$ has unique solution in $S$ and $(S, \circ)$ has both sided identity.
 In \cite{rltr}, it has been shown that for each right loop there exists a pair $(G,H)$ such that $H$ is a core-free subgroup of the group $G$ and the given right loop can be identified with a tansversal of $H$ in $G$. Not all transversals of a subgroup generate the group. But it is proved by Cameron in \cite{pjc}, that if a subgroup of a finite group is core-free, then always there exists a transversal which generates the whole group. We call such a transversal as {\it generating transversal}.

Let $(S, \circ)$ be a right loop (identity denoted by $1$). Let $x,y,z \in S$. Define a map $f^S(y,z)$ from $S$ to $S$ as follows: $f^S(y,z)(x)$ is the unique solution of the equation $X \circ (y \circ z )=(x \circ y ) \circ z$, where $X$ is unknown. It is easy to verify that $f^S(y,z)$ is a bijective map. For a set $X$, let $Sym(X)$ denote the symmetric group on $X$. We denote by $G_S$ the subgroup of $Sym(S)$, generated by the set $\{f^S(y,z) \mid y,z \in S \}$. This group is called {\textit{group torsion}} of $S$ \cite[Definition 3.1, p. 75]{rltr}. It measures the deviation of a right loop from being a group. Our convention for the product in
the symmetric group $Sym(S)$ is given as $(rs)(x) =s(r(x))$ for $r, s \in Sym(S)$ and $x \in S$. Further, the group $G_S$  acts on $S$ through the action $\theta^S$ defined as: for $x \in S$ and $h \in G_S$; $x \theta^S h := h(x)$. Also note that right multiplication by an element of $S$ gives a bijective map from $S$ to $S$, that is an element of $Sym(S)$. 
The subgroup generated by this type of elements in $Sym(S)$ is denoted by $G_SS$ because $G_S$ is a subgroup of it and the right multiplication map associated with the elements of $S$ form a transversal of $G_S$ in $G_SS$. Note that if $H$ is a core-free subgroup of a group $G$ and $S$ is a generating transversal of $H$ in $G$, then $G\cong G_SS$ such that $H \cong G_S$ \cite[Proposition 3.10, p. 80]{rltr}.

A non-empty subset $T$ of right loop $S$ is called a \textit{right subloop} of $S$, if it is right loop with respect to induced binary operation on $T$ (see \cite[Definition 2.1, p. 2683]{rps}). An equivalence relation $R$ on a right loop $S$ is called a congruence in $S$, if it is a sub right loop of $S \times S$. 
Also an \textit{invariant right subloop} of a right loop $S$ is precisely the equivalence class of the identity of a congruence in $S$ (\cite[Definition 2.8, p. 2689]{rps}).
It is observed in the proof of \cite[Proposition 2.10, p. 2690]{rps} that if $T$ is an invariant right subloop of $S$, then the set $S/T=\{T \circ x|x \in S\}$ becomes right loop called as \textit{quotient of S mod T}. Let $R$ be the congruence associated to an invariant right subloop $T$ of $S$. Then we also denote $S/T$ by $S/R$. 

\section{Some Results Through Transversals}

The group $G_SS$ has the natural action $\star$ on $S$. Consider the set $\sigma(S)=\{(x,y) \in S \times S|stab(G_SS,x)=stab(G_SS,y)\}$, where for a pemutation group $G$ on a set $X$, $stab(G,u)$ denotes the stabilizer of $u \in X$ in $G$. One can check that $\sigma(S)$ is an equivalence relation on $S$. This relation is called as \textit{stability relation} on $S$. One also observes that if $(x,y) \in \sigma(S)$, then $stab(G_SS,x \star p)=p~ stab(G_SS,x)~ p^{-1}=p~ stab(G_SS,y)~ p^{-1}=stab(G_SS,y \star p)$ for all $p \in G_SS$. Consider the equivalence class $\sigma(S)_1$ of $1 \in S$ of a right loop $S$. For $u \in S$, let $R_u(x)=x \circ u$ for all $x \in S$. Let $x,y,z \in S$. Write equation $(C6)$ of \cite[Definition 2.1, p. 71]{rltr} as $f^S(y,z)(x)=(R_yR_zR_{y \circ z}^{-1})(x)$. Recall that our convention for the product in the symmetric group $Sym(S)$ is given as $(rs)(x)=s(r(x))$ for $r,s \in Sym(S)$ and $x \in S$. Which means that $G_S=\langle R_yR_zR_{y \circ z}^{-1} |y,z \in S\rangle$. Let $x \in \sigma(S)_1$. Then  $stab(G_SS,x)=G_S$. This means that $R_yR_zR_{y \circ z}^{-1}(x)=x$ for all $y,z \in S$. This implies that $\sigma(S)_1=\{x \in S|x \circ (y \circ z)=(x \circ y)\circ z, \text{for~all~} y,z \in S\}$. One can observe that $\sigma(S)_1$ is a right subloop of $S$ which is indeed a group.

Let $x \in \sigma(S)_1$ and $f^S(y,z) \in G_S$. By \cite[Lemma 2.11, p. 6]{vkj} and the paragraph before \cite[Lemma 2.11, p. 6]{vkj},

\qquad \qquad $x f^S(y,z)=\eta^S_x(f^S(y,z)) x \theta^S f^S(y,z)$, where $\eta^S_x(f^S(y,z)) \in G_S$

\qquad \qquad \qquad \qquad \ $=\eta^S_x(f^S(y,z))f^S(y,z)(x)$

\qquad \qquad \qquad \qquad \ $=\eta^S_x(f^S(y,z))x$

This implies that $x \in N_{G_SS}(G_S)$, where $N_{G}(H)$ denotes the normalizer of $H$ in $G$. Thus, we have proved following:

\begin{prop}\label{p1} Let $S$ be a right loop and $\sigma(S)$ be the stability relation. Then
\[\sigma(S)_1=\{x \in S|x \circ (y \circ z)=(x \circ y)\circ z, \text{for~all~} y,z \in S\}=N_{G_SS}(G_S) \cap S.\]
\end{prop}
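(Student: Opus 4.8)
The statement bundles two equalities, and I would establish them in turn; write $A=\{x\in S: x\circ(y\circ z)=(x\circ y)\circ z\ \text{for all}\ y,z\in S\}$ for the middle set. For $\sigma(S)_1=A$, note that $x\in\sigma(S)_1$ means $stab(G_SS,x)=stab(G_SS,1)$ by definition; since each $f^S(y,z)$ fixes $1$ and $S$ is a transversal of $G_S$ in $G_SS$, one has $stab(G_SS,1)=G_S$, so $x\in\sigma(S)_1$ is equivalent to every generator $f^S(y,z)$ of $G_S$ fixing $x$. As $f^S(y,z)(x)$ is by definition the unique solution of $X\circ(y\circ z)=(x\circ y)\circ z$, the equation $f^S(y,z)(x)=x$ is exactly $x\circ(y\circ z)=(x\circ y)\circ z$; letting $y,z$ range over $S$ gives $\sigma(S)_1=A$, precisely as the discussion preceding the statement records.

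For $\sigma(S)_1=N_{G_SS}(G_S)\cap S$ I would exploit conjugacy of point stabilizers. Since $R_x(1)=1\circ x=x$, we have $1\star R_x=x$, so the orbit--stabilizer conjugacy $stab(G_SS,1\star p)=p\,stab(G_SS,1)\,p^{-1}$ taken at $p=R_x$ gives $stab(G_SS,x)=R_x\,G_S\,R_x^{-1}$. Reading off the equivalences, $x\in\sigma(S)_1$ iff $stab(G_SS,x)=G_S$ iff $R_x\,G_S\,R_x^{-1}=G_S$ iff $R_x\in N_{G_SS}(G_S)$; under the identification of $x\in S$ with $R_x\in G_SS$ this is membership in $N_{G_SS}(G_S)\cap S$, so both inclusions follow at once.

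The inclusion $\subseteq$ is also the one worked out in the text: for $x\in\sigma(S)_1$ and $g\in G_S$, the transversal product $xg=\eta^S_x(g)\,(x\theta^S g)$ with $\eta^S_x(g)\in G_S$ and $x\theta^S g=g(x)=x$ forces $xgx^{-1}=\eta^S_x(g)\in G_S$. With that half already in hand, the only genuinely new content is the reverse inclusion $N_{G_SS}(G_S)\cap S\subseteq\sigma(S)_1$, and this is the step where I expect the only real care to be required. The stabilizer route above settles it uniformly, but one must fix the correct side of the conjugation formula under the convention $(rs)(a)=s(r(a))$ and keep the identification of the loop element $x$ with the permutation $R_x$ consistent throughout. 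If one instead pushes the product formula through for this direction, the crux is uniqueness of coset representatives in the transversal $S$: from $xg\in G_Sx$ (normalization) and $xg\in G_S\,g(x)$ (the product formula) one reads off $G_Sx=G_S\,g(x)$, hence $x=g(x)$ for every $g\in G_S$, which is exactly membership in $\sigma(S)_1$.
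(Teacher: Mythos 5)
Your proof of the second equality, $\sigma(S)_1=N_{G_SS}(G_S)\cap S$, is correct and complete, and is in fact tighter than the paper's own treatment. From $1\star R_x=x$, from $stab(G_SS,1)=G_S$ (which you justify correctly via the transversal property of $\{R_u\mid u\in S\}$ in $G_SS$), and from conjugacy of stabilizers along an orbit, one gets $stab(G_SS,x)=R_x^{-1}G_SR_x$; with the paper's convention $(rs)(a)=s(r(a))$ the action $\star$ is a right action, so the conjugating element sits on the opposite side from the formula you quoted, but, as you yourself anticipate, the equivalence $x\in\sigma(S)_1\Leftrightarrow R_x\in N_{G_SS}(G_S)$ is insensitive to the side. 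By contrast, the paper only derives $\sigma(S)_1\subseteq N_{G_SS}(G_S)\cap S$ from the factorization $xg=\eta^S_x(g)(x\theta^S g)$ and never argues the reverse inclusion, which your stabilizer computation supplies.

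The genuine gap is in the first equality $\sigma(S)_1=A$, where $A$ denotes the middle set of the statement (your notation). You assert that $x\in\sigma(S)_1$ is ``equivalent to'' every generator $f^S(y,z)$ fixing $x$, but only the forward direction is immediate: if every generator fixes $x$, one obtains $G_S\subseteq stab(G_SS,x)=R_x^{-1}G_SR_x$, whereas $x\in\sigma(S)_1$ requires equality. The missing step --- a conjugate of $G_S$ containing $G_S$ must equal $G_S$ --- is exactly where finiteness of $G_S$ (e.g.\ of $S$) is indispensable, and it is never invoked in your argument. This is not a formality: for $G=\langle a,t\mid tat^{-1}=a^2\rangle\cong\mathbb{Z}[1/2]\rtimes\mathbb{Z}$, $H=\langle a\rangle$ (core-free) and the generating transversal $S=(\mathbb{Z}[1/2]\cap[0,1))\times\mathbb{Z}$, which contains $t$, one has $G\cong G_SS$, $G_S\cong H$, and $stab(G_SS,x)$ corresponds to $x^{-1}Hx$; here $t$ lies in $A$ (writing $y\circ z=hyz$ with $h\in H$, left associativity of $t$ amounts to $tht^{-1}\in H$, which holds since $tHt^{-1}\subseteq H$), yet $t^{-1}Ht\neq H$, so $t\notin\sigma(S)_1$ and $A\neq\sigma(S)_1$. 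Your closing paragraph does not repair this: the product-formula computation there ends with ``$x=g(x)$ for every $g\in G_S$,'' which is membership in $A$, not in $\sigma(S)_1$, and calling it the latter presupposes the very equivalence at issue. (In fairness, the paper's own proof has the same defect --- it, too, proves only the one-sided inclusions --- and the proposition as literally stated for arbitrary right loops is false; a correct proof must assume finiteness and use it at precisely this point.)
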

We now have following proposition:
\begin{thm}\label{p2}
Let $S$ be a right loop. Then $N_{G_SS}(N_{G_SS}(G_S))=G_SS$ if and only if $\sigma(S)$ is a congruence on $S$ and $\{(x,x \theta^S f^S(y,z))|x \in S\} \subseteq \sigma(S)$.
\end{thm}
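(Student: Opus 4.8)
The plan is to translate the conclusion into a single group-theoretic statement and then match it against the two right-loop conditions. Write $G := G_SS$ and $N := N_{G_SS}(G_S)$. Since the outer normalizer $N_{G_SS}(N_{G_SS}(G_S))$ is $N_G(N)$, the equation $N_{G_SS}(N_{G_SS}(G_S)) = G_SS$ says exactly that $N \trianglelefteq G$. Because $G$ is generated by $G_S$ together with the right multiplications $\{R_x : x \in S\}$, and every element of $G_S \le N$ normalizes $N$, normality of $N$ is equivalent to $R_x N R_x^{-1} = N$ for all $x \in S$. So the whole theorem reduces to matching this family of conjugation conditions against conditions (i) and (ii).

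First I would set up the dictionary between $\sigma(S)$ and cosets of $N$. Using $1 \star R_x = x$ and the reversed-composition convention, one gets $\mathrm{stab}(G,x) = R_x^{-1} G_S R_x$ (with $\mathrm{stab}(G,1) = G_S$), so $(x,y) \in \sigma(S)$ iff $R_x^{-1} G_S R_x = R_y^{-1} G_S R_y$ iff $R_x R_y^{-1} \in N$ iff $N R_x = N R_y$; thus the $\sigma$-classes are the traces on $\{R_x\}$ of the right cosets of $N$. The one identity I must handle with care is the cocycle relation $R_y R_z = f^S(y,z) R_{y \circ z}$ with $f^S(y,z) \in G_S \le N$, which gives $N R_{x_1 \circ x_2} = N R_{x_1} R_{x_2}$, i.e. the factors $f^S$ are absorbed modulo $N$. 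From Proposition \ref{p1} and the transversal property I would also record the decomposition $N = \bigsqcup_{u \in \sigma(S)_1} G_S R_u$, whence $N = \langle G_S, A \rangle$ where $A := \{R_x R_y^{-1} : (x,y) \in \sigma(S)\}$ is a symmetric set containing every $R_u$ with $u \in \sigma(S)_1$.

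For the forward direction, assume $N \trianglelefteq G$. Then $R_x G_S R_x^{-1} \subseteq R_x N R_x^{-1} = N$; after checking that $(x, x\theta^S f^S(y,z)) \in \sigma(S)$ iff $R_x f^S(y,z) R_x^{-1} \in N$, this is precisely condition (ii). For the congruence condition I would introduce $\phi \colon S \to G/N$, $x \mapsto N R_x$; normality of $N$ together with $N R_{x_1 \circ x_2} = N R_{x_1} R_{x_2}$ makes $\phi$ a homomorphism of right loops onto the group $G/N$, and $\sigma(S)$ is exactly the fibre relation of $\phi$, hence a congruence.

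For the reverse direction, the two hypotheses convert into the two pieces of conjugation data. Applying closure of $\sigma(S)$ under $\circ$ to a diagonal pair $(y_1,y_1)$ against an arbitrary $(x_2,y_2) \in \sigma(S)$ forces $R_{y_1}(R_{x_2} R_{y_2}^{-1}) R_{y_1}^{-1} \in N$, that is $R_x A R_x^{-1} \subseteq N$ for all $x$; condition (ii) gives $R_x G_S R_x^{-1} \subseteq N$ for all $x$. Since $N = \langle G_S, A\rangle$ and $G_S \cup A$ is symmetric, these combine to $R_x N R_x^{-1} \subseteq N$ for every $x$. Upgrading this inclusion to equality is the step I expect to be the main obstacle, particularly if $S$ is not assumed finite: I would solve $X \circ x = 1$ in $S$ to obtain $x'$ with $R_x^{-1} \in G_S R_{x'}$, so that $R_x^{-1} N R_x = h\,R_{x'} N R_{x'}^{-1}\,h^{-1} \subseteq N$ for some $h \in G_S \le N$; the two inclusions then yield $R_x N R_x^{-1} = N$ for all $x$, and hence $N \trianglelefteq G$. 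Throughout, the genuinely delicate points are the reversed composition convention and the repeated appeal to $f^S(y,z) \in G_S \le N$ — it is exactly this absorption that makes conditions (i) and (ii) \emph{jointly}, rather than separately, equivalent to the normality of $N$.
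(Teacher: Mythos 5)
Your proof is correct, and it is worth recording how it relates to the paper's. The reduction you start from (namely $N_{G_SS}(N_{G_SS}(G_S))=G_SS$ iff $N:=N_{G_SS}(G_S)$ is normal in $G:=G_SS$) and your whole ``only if'' direction coincide with the paper's argument: the paper, too, observes that once $N\trianglelefteq G$ the map $x\mapsto N R_x$ is a right-loop homomorphism onto the group $G/N$ whose kernel class is $\sigma(S)_1$, so $\sigma(S)$ is a congruence, and gets the second condition from conjugation of $G_S$ into $N$ (citing Lemmas 2.9 and 2.10 of \cite{vkj} where you compute $R_xG_SR_x^{-1}\subseteq R_xNR_x^{-1}=N$ directly). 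The genuine difference is the ``if'' direction: the paper disposes of it in one line by citing Lemma 2.12 of the preprint \cite{vkj}, whereas you reconstruct that content from scratch inside $G_SS$ --- the dictionary $(x,y)\in\sigma(S)\Leftrightarrow NR_x=NR_y$, the absorption $R_yR_z=f^S(y,z)R_{y\circ z}$ with $f^S(y,z)\in G_S\le N$, the decomposition $N=\bigcup_{u\in\sigma(S)_1}G_SR_u$ giving $N=\langle G_S, A\rangle$, the diagonal trick pairing $(y_1,y_1)$ with $(x_2,y_2)$ to turn congruence-closure into $R_xAR_x^{-1}\subseteq N$, and finally the upgrade of $R_xNR_x^{-1}\subseteq N$ to equality via a right inverse $x'$ with $x'\circ x=1$. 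That last step deserves emphasis: the theorem carries no finiteness hypothesis, so one cannot promote the inclusion to equality by counting, and the paper's citation hides this point entirely; your handling of it is exactly what a self-contained proof needs. What the paper's route buys is brevity and reuse of machinery already developed in \cite{vkj}; what yours buys is a proof valid verbatim for arbitrary (possibly infinite) right loops that also makes transparent why the two hypotheses are needed \emph{jointly}: condition (ii) controls conjugates of $G_S$, the congruence condition controls conjugates of the representatives $R_u$ with $u\in\sigma(S)_1$, and these two families together generate $N$. I also checked your convention-sensitive identities ($stab(G,x)=R_x^{-1}G_SR_x$, $R_x^{-1}\in G_SR_{x'}$, and the equivalence of $(x,x\theta^Sf^S(y,z))\in\sigma(S)$ with $R_xf^S(y,z)R_x^{-1}\in N$) against the paper's product rule $(rs)(x)=s(r(x))$; they are all stated on the correct side, so no corrections are needed there.
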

\begin{proof}
By \cite[Lemma 2.12, p. 6]{vkj} and Proposition \ref{p1}, one can observe the 'if' part. We will now observe the 'only if' part.
 
Assume that $N_{G_SS}(N_{G_SS}(G_S))=G_SS$. This implies that $N_{G_SS}(G_S)\trianglelefteq G_SS$. In this case, $\sigma(S)_1$ is the kernel of the homomorphism from right loop $S$ to $G_SS/N_{G_SS}(G_S)$ given by $x \mapsto N_{G_SS}(G_S)x$. Thus, $\sigma(S)$ is a congruence on $S$. Also, by \cite[Lemma 2.9, 2.10, p. 6]{vkj} and Proposition \ref{p1}, $\{(x,x \theta^S f^S(y,z))|x \in S\} \subseteq \sigma(S)$. 

\end{proof}
Let $H$ be a core-free subgroup of a group $G$. Let $S$ be a transversal of $H$ in $G$. Let $u \in S$. In following corollary, $stab(H,u)$ denotes the stabilizer of $H$ at $u$ with respect to the action $\theta$ described before \cite[Lemma 2.1, p. 6]{vkj}.

\begin{cor}
Let $G$ be a finite group and $S$ be a generating transversal of a core-free subgroup $H$ of $G$. Then $N_G(N_G(H))=G$ if and only if $X=\{(x,y) \in S \times S|stab(H,x)=stab(H,y)\}$ is a congruence on the induced right loop structure on $S$ and $\{(x,x \theta h)|x \in S \text{~and~} h \in H\} \subseteq X$.   
\end{cor}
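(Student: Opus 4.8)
The plan is to deduce this corollary from Theorem \ref{p2} by transporting every ingredient across the isomorphism $G \cong G_S S$ carrying $H$ onto $G_S$, which is available by \cite[Proposition 3.10, p. 80]{rltr} precisely because $H$ is core-free and $S$ is a generating transversal. Under this isomorphism the natural action of $G_S S$ on $S$ becomes the action of $G$ on the right cosets of $H$, the subgroup $G_S$ becomes $H$, and the action $\theta^S$ of $G_S$ on $S$ becomes the action $\theta$ of $H$ on $S$. Consequently the identity $N_{G_S S}(N_{G_S S}(G_S)) = G_S S$ translates verbatim into $N_G(N_G(H)) = G$, so the left-hand sides of the two biconditionals correspond at once, and it remains to match the two right-hand sides.

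Next I would set up the dictionary for the combinatorial data. Reading $f^S(y,z)$ as an element $h$ of $G_S \cong H$, one has $x\,\theta^S f^S(y,z) = x\,\theta h$, and the generating set $\{f^S(y,z)\mid y,z\in S\}$ of $G_S$ corresponds to a generating subset of $H$; hence, once the relevant relation is a congruence, the inclusion $\{(x, x\,\theta^S f^S(y,z))\mid x\in S\} \subseteq \sigma(S)$ of Theorem \ref{p2} is interchangeable with the full-orbit inclusion $\{(x, x\,\theta h)\mid x\in S,\ h\in H\}\subseteq X$ of the corollary. A useful reformulation here is that, since $stab(H, x\,\theta h) = h^{-1}\,stab(H,x)\,h$, the orbit inclusion into $X$ is equivalent to each stabilizer $stab(H,x) = H\cap x^{-1}Hx$ being normal in $H$. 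The substantive remaining task is to identify the relation $X$ with the relation $\sigma(S)$ of the theorem.

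This identification is exactly where I expect the main obstacle to lie, because the two relations are built from different stabilizers: $\sigma(S)$ compares $stab(G_S S, x)$, which in the coset model are the full conjugates $x^{-1}Hx$, whereas $X$ compares the subgroup stabilizers $stab(H,x) = H\cap x^{-1}Hx$, so $\sigma(S)$ refines $X$ but the two are not literally equal for an arbitrary pair $(G,H)$. I would exploit finiteness first: from $stab(H,x)\subseteq stab(G_S S,x)$ together with $|stab(G_S S,x)| = |G_S| = |H|$, an inclusion of finite subgroups of equal order is an equality, so on the class of the identity the two relations agree and $X_1 = \sigma(S)_1 = N_{G_S S}(G_S)\cap S$ as in Proposition \ref{p1}. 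Since a congruence is determined by its identity class (the associated invariant right subloop), this already fixes the quotient. I would then show that, in the presence of the normality reformulation of the orbit condition, the coarser relation $X$ partitions $S$ in the same way as $\sigma(S)$, so that ``$X$ is a congruence'' and ``$\sigma(S)$ is a congruence'' become interchangeable; feeding this into Theorem \ref{p2} gives the corollary. Everything except this last reconciliation of the subgroup-stabilizer relation with the full-group-stabilizer relation is a routine transport along the isomorphism, and finiteness is essential precisely for the equal-order argument that drives the reconciliation.
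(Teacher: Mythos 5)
Your reduction to Theorem \ref{p2} via the isomorphism $G\cong G_SS$, $H\cong G_S$ is indeed the only route available (the paper offers no separate argument for the corollary), and you have put your finger on exactly the right issue: the corollary's relation $X$ is built from the subgroup stabilizers $stab(H,x)=H\cap x^{-1}Hx$, whereas $\sigma(S)$ transports to the relation built from the full stabilizers $stab(G,x)=x^{-1}Hx$. Your finiteness argument for the identity classes is also correct: $stab(H,x)=H$ forces $H\subseteq x^{-1}Hx$ and hence $H=x^{-1}Hx$, so $X_1=\sigma(S)_1$. But the last step of your plan --- that the orbit/normality condition makes the coarser relation $X$ partition $S$ the same way as $\sigma(S)$, so that the two congruence conditions become interchangeable --- is not merely left as a sketch; it is false, and no argument can close this gap, because the corollary read with $stab(H,\cdot)$ fails in the forward direction.

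Concretely, take $G=A_4$, $H=\langle(12)(34)\rangle$, and the generating transversal $S=\{e,(13)(24),(123),(132),(124),(134)\}$. Here $N_G(H)=V_4$ (the Klein four subgroup) is normal, so $N_G(N_G(H))=G$, and the orbit condition holds trivially, since $stab(H,x)\in\{1,H\}$ is always normal in $H$. The relation $\sigma(S)$ has three classes, namely the intersections of $S$ with the three cosets of $V_4$, and it is a congruence, exactly as Theorem \ref{p2} predicts. But $X$ is strictly coarser: every $3$-cycle in $S$ has trivial $stab(H,\cdot)$, so $X$ has only the two classes $\{e,(13)(24)\}$ and $\{(123),(132),(124),(134)\}$, and $X$ is not a congruence: the pairs $((123),(132))$ and $((123),(123))$ lie in $X$, yet their componentwise product $\bigl((123)\circ(123),\,(132)\circ(123)\bigr)=\bigl((132),\,e\bigr)$ does not, since $stab(H,(132))=1\neq H=stab(H,e)$. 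The moral is that away from the identity class the intersected stabilizer $H\cap x^{-1}Hx$ simply does not determine the conjugate $x^{-1}Hx$, with or without the normality condition, so ``$X$ is a congruence'' and ``$\sigma(S)$ is a congruence'' are genuinely inequivalent. The statement that actually follows from Theorem \ref{p2} replaces $stab(H,x)$ by $stab(G,x)=x^{-1}Hx$, the stabilizer under the full coset action; with that reading, your first two paragraphs (the routine transport along the isomorphism) already constitute a complete proof, and the entire reconciliation apparatus is unnecessary.
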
  
\begin{thm}\label{2}
Suppose that $S$ is a right loop and $T$ is its invariant sub right loop of order $2$ such that the quotient $S/T$ is a group. Then the group torsion of $S$ will be an elementary abelian $2$-group. 
\end{thm}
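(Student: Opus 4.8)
The plan is to exploit the block structure that the invariant subloop $T$ imposes on $S$ together with the fact that $G_S$ is generated by the maps $f^S(y,z)$. First I would record the elementary facts forced by $|T|=2$. Writing $T=\{1,t\}$, bijectivity of $R_t$ on $T$ forces $t\circ t=1$, so $T\cong\mathbb{Z}/2\mathbb{Z}$. Next I would check that every class of the congruence $R$ associated to $T$ has exactly two elements. Fixing $x$ and letting $[x]$ denote its class, the map $u\mapsto u\circ x$ sends $T$ into $[x]$, since $(1,u)\circ(x,x)=(x,u\circ x)\in R$ by closure of the subloop $R\subseteq S\times S$; it is injective because $R_x$ is a bijection, and it is onto because for any $(x,y)\in R$ the unique solution $X$ of $X\circ(x,x)=(x,y)$ already lies in the right subloop $R$ and has first coordinate $1$ by cancellation. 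Hence $[x]=\{x,\,t\circ x\}$ and $|[x]|=2$ for every $x\in S$.

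The key step is to prove that each generator $f^S(y,z)$ preserves the classes of $R$, that is, $f^S(y,z)(x)$ and $x$ lie in the same class for all $x$. For this I would apply the quotient homomorphism $\pi\colon S\to S/T$ to the defining relation $f^S(y,z)(x)\circ(y\circ z)=(x\circ y)\circ z$. Because $S/T$ is a group, the right-hand side projects to $\pi(x)\circ(\pi(y)\circ\pi(z))$ by associativity, so after cancelling $\pi(y)\circ\pi(z)$ (right multiplication in the group $S/T$ is injective) one obtains $\pi(f^S(y,z)(x))=\pi(x)$, i.e.\ $f^S(y,z)(x)\in[x]$. This is precisely where the hypothesis that $S/T$ is a group is consumed, and I expect it to be the main content of the argument.

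It then remains to assemble the conclusion. Each $f^S(y,z)$ permutes $S$ while preserving the partition of $S$ into the two-element classes of $R$, so its restriction to each class lies in the symmetric group of that class, which is $\mathbb{Z}/2\mathbb{Z}$. Consequently $G_S=\langle f^S(y,z)\mid y,z\in S\rangle$ is contained in the subgroup of $Sym(S)$ consisting of block-preserving permutations, which is $\prod_{[x]}Sym([x])$; since every block has size $2$, this ambient subgroup is the elementary abelian $2$-group $\prod_{[x]}\mathbb{Z}/2\mathbb{Z}$. Therefore $G_S$ is itself an elementary abelian $2$-group. I would close by remarking that the order-$2$ hypothesis enters exactly here, as it forces each block symmetric group to be abelian of exponent $2$; for a larger invariant subloop with group quotient the identical reasoning only embeds $G_S$ into a product of symmetric groups of order $|T|$.
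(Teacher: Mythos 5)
Your proposal is correct and takes essentially the same route as the paper: both arguments push the defining identity $f^S(y,z)(x)\circ (y\circ z)=(x\circ y)\circ z$ down to the group $S/T$ via the quotient map and use cancellation there to conclude $f^S(y,z)(x)\in T\circ x=\{x,t\circ x\}$, so that every generator of $G_S$ acts within the two-element blocks. If anything, your closing step---embedding $G_S$ into the block stabilizer $\prod_{[x]}Sym([x])$, an elementary abelian $2$-group---spells out more carefully what the paper's proof asserts with ``this clearly implies,'' namely why these block-preserving involutions generate an abelian group of exponent $2$.
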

\begin{proof}
Suppose that $T=\{1, t\}$. Since $T$ is an invariant sub right loop, so $t$ is fixed by all the elements of $G_S$. 
Consider the equation $(x \circ y ) \circ z = (x \theta^S f^S(y,z)) \circ (y \circ z)$. 
Since $S/T$ has associativity, so  $T \circ ( (x \circ y ) \circ z) = (T \circ x \theta^S f^S(y,z)) \circ ((T \circ y) \circ (T \circ z))$. Since $S/T$ is a group, so by cancellation law we have $T\circ x=T \circ (x \theta^S f^S(y,z))$. This implies $\{x, t \circ x \}=\{x \theta^S f^S(y,z), t \circ x \theta^S f(y,z) \}$. Note that for each $y,z \in S$, an element $x \in S$ is either fixed by $f^S(y,z)$ or $x \theta^S f^S(y,z)=f^S(y,z)(x)=t \circ x$. Suppose that $f^S(y,z)(x)=t \circ x$. Since $f^S(y,z)$ is bijective, so $f^S(y,z)(t \circ x) \neq t \circ x$. Then $f^S(y,z)(t \circ x)= t\circ (t \circ x)=(t \theta^S f^S(y,z)^{-1} \circ t) \circ x=(t \circ t )\circ x =x$. Thus either $x$ is fixed by $f^S(y,z)$ or there is a transposition $(x, t \circ x)$ in the cycle decomposition of $f^S(y,z)$. That is $f^S(y,z)$ can be written as a product of disjoint transpositions. This clearly implies that $G_S= \langle \{f^S(y,z) \mid y,z \in S \} \rangle $ is an elementary abelian 2-group. This proves the theorem.
\end{proof}

Following is a known result (can be proved independently) follows from Theorem \ref{2}. 

\begin{cor}
Let $G$ be a finite group and $H$ a core-free subgroup contained in a normal subgroup $N$ such that the index of $H$ in $N$ is $2$. Then $N$ is an elementary abelian $2$-group.
\end{cor}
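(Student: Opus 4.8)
The plan is to pass to the loop-theoretic picture, apply Theorem \ref{2} to pin down $H$, and then use core-freeness to promote the conclusion from $H$ to $N$. Since $H$ is core-free in $G$, Cameron's theorem \cite{pjc} supplies a generating transversal $S$ of $H$ in $G$; by \cite[Proposition 3.10, p. 80]{rltr} we may identify $G$ with $G_SS$ and $H$ with the group torsion $G_S$, and $S$ carries the induced right loop structure. Set $T=S\cap N$. Because $H\le N$, the set $T$ consists of exactly one representative from each coset of $H$ lying in $N$, so $T$ is a transversal of $H$ in $N$ and $|T|=[N:H]=2$.

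First I would check that $T$ is an invariant right subloop of $S$ with $S/T$ a group. The relation $x\sim y\iff Nx=Ny$ is a congruence on $S$ whose quotient is isomorphic to $G/N$; since $N\trianglelefteq G$ this quotient is a group, and the class of $1$ is exactly $\{s\in S: s\in N\}=T$. Thus $T$ is the invariant right subloop attached to this congruence in the sense of \cite{rps}, and $S/T\cong G/N$ is a group. Now Theorem \ref{2} applies directly: $T$ is an invariant sub right loop of order $2$ with $S/T$ a group, so the group torsion $G_S$ is an elementary abelian $2$-group. Under the identification above this says $H\cong G_S$ is an elementary abelian $2$-group, and in particular $N$ is a $2$-group of order $2|H|$.

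The remaining step is to upgrade the conclusion from $H$ to $N$, and this is where core-freeness does work that Theorem \ref{2} cannot. Consider $L=N'N^{2}$, where $N'=[N,N]$ and $N^{2}=\langle n^{2}\mid n\in N\rangle$. Both factors are fully invariant in $N$, so $L$ is characteristic in $N$; as $N\trianglelefteq G$ this forces $L\trianglelefteq G$. On the other hand $N/H$ has order $2$ and is therefore an elementary abelian $2$-group, so the canonical projection $N\to N/H$ annihilates both commutators and squares; hence $L\subseteq H$. Since $H$ is core-free in $G$ and $L$ is a normal subgroup of $G$ contained in $H$, we conclude $L=1$. Thus $N'=1$ and $N^{2}=1$, that is, $N$ is abelian of exponent dividing $2$, which is precisely the assertion that $N$ is an elementary abelian $2$-group.

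The step I expect to be the main obstacle is this bridge from $H$ to $N$: Theorem \ref{2} controls only the group torsion, so on its own it yields that $N$ is a $2$-group but says nothing about its abelianness or exponent. The input that closes the gap is core-freeness, used through the observation that the characteristic subgroup $N'N^{2}$ is forced into $H$; this observation is also the independent proof alluded to in the statement. A secondary, more technical point is the verification that $T=S\cap N$ is genuinely an invariant right subloop with $S/T$ a group, which must be made precise against the definitions of congruence and invariant right subloop in \cite{rps}.
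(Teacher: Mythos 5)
Your proof is correct, but note that the paper offers no argument at all for this corollary: it simply asserts that the result ``follows from Theorem \ref{2}'' and is known, so the comparison is with that implicit derivation. Your first two paragraphs execute exactly what the paper gestures at: Cameron's theorem gives a generating transversal $S$, the identification $G\cong G_SS$, $H\cong G_S$ applies because $S$ is generating and $H$ is core-free, $T=S\cap N$ is the invariant right subloop of order $2$ attached to the congruence $\{(x,y)\in S\times S\mid Nx=Ny\}$ (this is where normality of $N$ enters), $S/T\cong G/N$ is a group, and Theorem \ref{2} then makes $H\cong G_S$ an elementary abelian $2$-group. Your third paragraph is the genuinely valuable part: you correctly identify that Theorem \ref{2} controls only the group torsion, i.e.\ $H$, and that an elementary abelian subgroup of index $2$ does not by itself force the overgroup to be elementary abelian (compare $V_4\le D_4$), so a bridge using core-freeness is unavoidable. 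Your bridge --- $N'N^2$ is characteristic in $N$, hence normal in $G$, lies in $H$ because $N/H$ has order $2$, hence is trivial since $H$ is core-free --- is exactly right. The one observation worth making is that this bridge nowhere uses the conclusion of Theorem \ref{2}: the containment $N'N^2\le H$ already follows from $[N:H]=2$ alone, so your final paragraph is by itself a complete, purely group-theoretic proof of the corollary (evidently the ``independent proof'' the paper alludes to), and the transversal-theoretic part of your argument, while faithful to the paper's intent, becomes logically redundant. In effect your write-up shows that the corollary does not follow from Theorem \ref{2} without supplementing it by the classical characteristic-subgroup argument, and that once supplemented, Theorem \ref{2} is no longer needed; this is a gap in the paper's exposition rather than in your proof.
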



\begin{thebibliography}{99}




\bibitem[1]{pjc}  Cameron P. J.,  { Generating a group by a transversal} preprint available at http://www.maths.qmul.ac.uk/~pjc/preprints/transgenic.pdf 

\bibitem[2]{ict2} Jain V. K.,   Shukla R. P.,   {On the isomorphism classes of transversals II}. \textit{ Comm. Algebra} {39}(2011), 2024-2036.

\bibitem[3]{vkj}  Jain V. K., Kakkar V., {{ Solvable and Nilpotent Right Loops}} preprint available at http://arxiv.org/abs/1402.3378


\bibitem[4]{rltr}  Lal R.,
{{ Transversals in Groups}}.
\textit{ J. Algebra} {181}(1996), 70-81.

\bibitem[5]{p} Lal R., {{Some Problems on Dedekind-Type Groups}}. \textit{J. Algebra} {181} (1996), 223-234.


\bibitem[6]{tm}  Shukla R. P.,
{{A characterization of Tarski monsters}}. \textit{Indian J. Pure Appl. Math} {36(12)}(2005), 673-678.






\bibitem[7]{rps}  Shukla R. P.,  {{Congruences in right quasigroups and general extensions}}. \textit{ Comm. Algebra} {23}(1995), 2679-2695.




\end{thebibliography}
\end{document}